\documentclass[12pt]{amsart}
\pagestyle{plain}

\usepackage[T1]{fontenc}
\usepackage{graphics}
\usepackage{amsfonts,amssymb,color}
\usepackage[mathscr]{eucal}
\usepackage{amsmath, amsthm}
\usepackage{mathrsfs}
\usepackage{amsbsy}
\usepackage{dsfont}
\usepackage{bbm}
\usepackage{wasysym}
\usepackage{stmaryrd}
\usepackage{url}

\input xypic
\xyoption{all}






\makeindex
\makeglossary

\begin{document}
\baselineskip = 16pt

\newcommand \ZZ {{\mathbb Z}}
\newcommand \NN {{\mathbb N}}
\newcommand \RR {{\mathbb R}}
\newcommand \PR {{\mathbb P}}
\newcommand \AF {{\mathbb A}}
\newcommand \GG {{\mathbb G}}
\newcommand \QQ {{\mathbb Q}}
\newcommand \CC {{\mathbb C}}
\newcommand \bcA {{\mathscr A}}
\newcommand \bcC {{\mathscr C}}
\newcommand \bcD {{\mathscr D}}
\newcommand \bcF {{\mathscr F}}
\newcommand \bcG {{\mathscr G}}
\newcommand \bcH {{\mathscr H}}
\newcommand \bcM {{\mathscr M}}
\newcommand \bcI {{\mathscr I}}
\newcommand \bcJ {{\mathscr J}}
\newcommand \bcK {{\mathscr K}}
\newcommand \bcL {{\mathscr L}}
\newcommand \bcO {{\mathscr O}}
\newcommand \bcP {{\mathscr P}}
\newcommand \bcQ {{\mathscr Q}}
\newcommand \bcR {{\mathscr R}}
\newcommand \bcS {{\mathscr S}}
\newcommand \bcV {{\mathscr V}}
\newcommand \bcU {{\mathscr U}}
\newcommand \bcW {{\mathscr W}}
\newcommand \bcX {{\mathscr X}}
\newcommand \bcY {{\mathscr Y}}
\newcommand \bcZ {{\mathscr Z}}
\newcommand \goa {{\mathfrak a}}
\newcommand \gob {{\mathfrak b}}
\newcommand \goc {{\mathfrak c}}
\newcommand \gom {{\mathfrak m}}
\newcommand \gon {{\mathfrak n}}
\newcommand \gop {{\mathfrak p}}
\newcommand \goq {{\mathfrak q}}
\newcommand \goQ {{\mathfrak Q}}
\newcommand \goP {{\mathfrak P}}
\newcommand \goM {{\mathfrak M}}
\newcommand \goN {{\mathfrak N}}
\newcommand \uno {{\mathbbm 1}}
\newcommand \Le {{\mathbbm L}}
\newcommand \Spec {{\rm {Spec}}}
\newcommand \Gr {{\rm {Gr}}}
\newcommand \Pic {{\rm {Pic}}}
\newcommand \Jac {{{J}}}
\newcommand \Alb {{\rm {Alb}}}
\newcommand \Corr {{Corr}}
\newcommand \Chow {{\mathscr C}}
\newcommand \Sym {{\rm {Sym}}}
\newcommand \Prym {{\rm {Prym}}}
\newcommand \cha {{\rm {char}}}
\newcommand \eff {{\rm {eff}}}
\newcommand \tr {{\rm {tr}}}
\newcommand \Tr {{\rm {Tr}}}
\newcommand \pr {{\rm {pr}}}
\newcommand \ev {{\it {ev}}}
\newcommand \cl {{\rm {cl}}}
\newcommand \interior {{\rm {Int}}}
\newcommand \sep {{\rm {sep}}}
\newcommand \td {{\rm {tdeg}}}
\newcommand \alg {{\rm {alg}}}
\newcommand \im {{\rm im}}
\newcommand \gr {{\rm {gr}}}
\newcommand \op {{\rm op}}
\newcommand \Hom {{\rm Hom}}
\newcommand \Hilb {{\rm Hilb}}
\newcommand \Sch {{\mathscr S\! }{\it ch}}
\newcommand \cHilb {{\mathscr H\! }{\it ilb}}
\newcommand \cHom {{\mathscr H\! }{\it om}}
\newcommand \colim {{{\rm colim}\, }} 
\newcommand \End {{\rm {End}}}
\newcommand \coker {{\rm {coker}}}
\newcommand \id {{\rm {id}}}
\newcommand \van {{\rm {van}}}
\newcommand \spc {{\rm {sp}}}
\newcommand \Ob {{\rm Ob}}
\newcommand \Aut {{\rm Aut}}
\newcommand \cor {{\rm {cor}}}
\newcommand \Cor {{\it {Corr}}}
\newcommand \res {{\rm {res}}}
\newcommand \red {{\rm{red}}}
\newcommand \Gal {{\rm {Gal}}}
\newcommand \PGL {{\rm {PGL}}}
\newcommand \Bl {{\rm {Bl}}}
\newcommand \Sing {{\rm {Sing}}}
\newcommand \spn {{\rm {span}}}
\newcommand \Nm {{\rm {Nm}}}
\newcommand \inv {{\rm {inv}}}
\newcommand \codim {{\rm {codim}}}
\newcommand \Div{{\rm{Div}}}
\newcommand \CH{{\rm{CH}}}
\newcommand \sg {{\Sigma }}
\newcommand \DM {{\sf DM}}
\newcommand \Gm {{{\mathbb G}_{\rm m}}}
\newcommand \tame {\rm {tame }}
\newcommand \znak {{\natural }}
\newcommand \lra {\longrightarrow}
\newcommand \hra {\hookrightarrow}
\newcommand \rra {\rightrightarrows}
\newcommand \ord {{\rm {ord}}}
\newcommand \Rat {{\mathscr Rat}}
\newcommand \rd {{\rm {red}}}
\newcommand \bSpec {{\bf {Spec}}}
\newcommand \Proj {{\rm {Proj}}}
\newcommand \pdiv {{\rm {div}}}
\newcommand \wt {\widetilde }
\newcommand \ac {\acute }
\newcommand \ch {\check }
\newcommand \ol {\overline }
\newcommand \Th {\Theta}
\newcommand \cAb {{\mathscr A\! }{\it b}}

\newenvironment{pf}{\par\noindent{\em Proof}.}{\hfill\framebox(6,6)
\par\medskip}

\newtheorem{theorem}[subsection]{Theorem}
\newtheorem{conjecture}[subsection]{Conjecture}
\newtheorem{proposition}[subsection]{Proposition}
\newtheorem{lemma}[subsection]{Lemma}
\newtheorem{remark}[subsection]{Remark}
\newtheorem{remarks}[subsection]{Remarks}
\newtheorem{definition}[subsection]{Definition}
\newtheorem{corollary}[subsection]{Corollary}
\newtheorem{example}[subsection]{Example}
\newtheorem{examples}[subsection]{examples}

\title[Bloch's conjecture]{Bloch's conjecture on certain surfaces of general type  with $p_g=0$ and with an involution: the Enriques case}
\author{Kalyan Banerjee}

\address{VIT Chennai}

\email{kalyan.banerjee@vit.ac.in}

\begin{abstract}
In this short note we prove that an involution on certain examples of surfaces of general type with $p_g=0$, acts as identity on the Chow group of zero cycles of the relevant surface. In particular we consider examples of such surfaces when the quotient is an Enriques surface and show that the Bloch conjecture holds for such surfaces.
\end{abstract}
\maketitle

\section{Introduction}
In \cite{M} Mumford has proved that if the geometric genus of a  smooth algebraic surface is greater than zero then the Chow group of zero cycles on  the surface is not finite dimensional in the sense that the natural map from the symmetric powers of the surface to the Chow group is not surjective. In other words this can be rephrased in terms of the albanese map from the Chow group of zero cycles on the surface to the albanese variety of the surface. It means that the albanese kernel is non-trivial and huge cannot be parametrized by an algebraic variety. So this side of the story is well understood and the it is interesting to look at the converse. That is whether a surface with geometric genus zero has Chow group of zero cycles supported at one point. This conjecture was originally made by Spencer Bloch. It is known for the surfaces of not of general type with geometric genus equal to zero due to \cite{BKL}. After that the conjecture was verified for some examples of surfaces of general type with geometric genus zero  due to \cite{B},\cite{IM}, \cite{Gul1}, \cite{Gul2}, \cite{V}, \cite{VC}.

This conjecture due to Bloch has been extended to the following for arbitrary smooth projective surfaces. Let $S,T$ be two smooth projective surfaces with $\Gamma$ a correspondence supported on $S\times T$. Suppose that $\Gamma^*$ from $H^{2,0}(T)$ to $H^{2,0}(S)$ is zero, then it is conjectured that $\Gamma_*$ from $\CH_0(S)$ to $\CH_0(T)$ vanishes on the albanese kernel of $\CH_0(S)\to Alb(S)$. This conjecture gives Bloch's conjecture when we take $S=T$ and $\Gamma$ to be the diagonal of a surface with geometric genus zero.

Let $S$ be a smooth projective surface equipped with an involution. Then this conjecture was verified when the correspondence is the graph of the involution on the smooth projective surface $S$ due to \cite{HK}, \cite{Voi},\cite{GT}. The aim of this manuscript is to verify the generalised Bloch's conjecture for some examples of surfaces of general type with geometric genus zero. Namely the Numerical Godeaux surfaces with an involution, some numerical Campedelli surfaces with an involution. These surfaces are surfaces of general type with geometric genus equal to $0$ and the self intersection of the canonical class is $1,2$ (for numercial Godeaux and numerical Campedelli surfaces respectively). Due to \cite{CCL}, \cite{CLP}, these surfaces with involution are classified. The Numerical Godeaux surfaces with an involution are classified into two parts, one is the quotient surface is birational to an Enriques surface and the other is that the quotient surface is rational. For Numerical Campedelli surfaces the situation is a bit more involved. The Numerical Campedelli surfaces whose bicanonical map factors through the quotient map arising from the involution are classified into two parts again, the quotient surface is birational to an Enriques surface and the other one is that the quotient surface is rational. These surfaces  are the examples for which the generalised Bloch's conjecture has been verified in this article. So the main theorem of this manuscript are the following.

\begin{theorem}
For all Numerical Godeaux surfaces with an involution such that the quotient is Enriques, the involution acts as identity on the group of algebraically trivial zero cycles modulo rational equivalence.
\end{theorem}

\begin{theorem}
For all Numerical Campedelli surfaces with an involution, such that the bicanonical map factors through the quotient map and the quotient is Enriques, the involution acts as identity on the group of algebraically trivial zero cycles modulo rational equivalence.
\end{theorem}

Consider a surface $S$ of general type with geometric genus equal to zero with $K^2=3$. Suppose such a surface has an involution $i$, such that the bicanonical map is not composed with the involution and the quotient is not rational, then by Theorem 5 of \cite{Ri}, we have the classification for the branch locus of the quotient map. Consider the case when this branch locus contains $5$ curves of self intersection $-1$ and an elliptic curve (in such a case the quotient is a surface not of general type with $p_g=0$). Then taking a very ample divisor on $S/i$ and arguing as in \ref{theorem2} we have:

\begin{theorem}
\label{theorem3}
The Bloch's conjecture holds for the surface $S$ with $K_S^2=3$ and having an involution such that the bicanonical map is not composed with an involution, with the components of the branch locus mentioned above.
\end{theorem}

As an application we show that the Bloch conjecture holds true on certain fake projective planes.

It is in fact interesting to understand whether the surfaces with irregularity zero and with an involution satisfies the same conjecture. The technique used to prove  these results are in  the same line as in \cite{Voi}, where the conjecture is verified for a K3 surface with a symplectic involution. She invokes the notion of finite dimensionality in the Roitman sense \cite{R1} and prove that the correspondence given by the difference of  the graph of the involution and the diagonal is finite dimensional. We use the same technique. Our calculation to show this is  bit different since we are working with a surface of general type with geometric genus equal to zero instead of a K3 surfaces.

{\small \textbf{Acknowledgements:} The author would like to thank the hospitality of IISER-Mohali and VIT Chennai for hosting this project. The author is indebted to Vladimir Guletskii for many useful conversations relevant to the theme of the paper. The author likes to thank Claire Voisin for her  advice on the theme of the paper. Finally the author thanks Claudio Pedrini for suggesting the problem  about surfaces with $K^2=3$ and $p_g=0$ having an involution and for fake projective planes. }

\section{Finite dimensionality in the sense of Roitman and $\CH_0$}

The beginning of this section is recalled from \cite{Voi}[Section 1] for the convenience of the reader and also to keep track of the arguments in the later part of the section of this paper.

First we recall the notion of finite dimensionality in the sense of Roitman \cite{R1}. Let $X$ be a smooth projective variety over $\CC$ and let $P$ be a subgroup of $\CH_0(X)$, then we will say that $P$ is finite dimensional in the Roitman sense, if there exists a smooth projective variety $W$ and a correspondence $\Gamma$ on $W\times X$, such that $P$ is contained in $\{\Gamma_*(w)|w\in W\}$.

The following proposition is due to Roitman and also due to Voisin. For convenience we recall the proof and the theorem.

Let $M,X$ be smooth projective varieties and $Z$ is a correspondence between $M$ and $X$ of codimension $d$.

\begin{theorem}\cite{Voi}
\label{theorem 1}
Assume that the image of $Z_*$ from $\CH_0(M)$ to $\CH_0(X)$ is finite dimensional in the Roitman sense. Then the map $Z_*$ from ${\CH_0(M)}_{hom}$ to $\CH_0(X)$ factors through the albanese map from ${\CH_0(M)}_{hom}$ to $\Alb(M)$.
\end{theorem}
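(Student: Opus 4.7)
By linearity of $Z_*$ it is enough to show that $Z_*(\alpha)=0$ for every $\alpha\in\CH_0(M)_{hom}$ lying in the kernel of the Albanese map $\alpha_M:\CH_0(M)_{hom}\to\Alb(M)$. Any such $\alpha$ can be written as $\sum_{i=1}^N([p_i]-[q_i])$ for suitable $p_i,q_i\in M$. The strategy is to regard $Z_*$ as inducing morphisms from smooth projective products of copies of $M$ into $\CH_0(X)$, and to use the finite-dimensionality hypothesis to force each such morphism to factor through the Albanese of its parameter space.

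Concretely, I consider the morphism
\[
\Phi : M\times M \to \CH_0(X), \qquad (p,q)\mapsto Z_*([p]-[q]),
\]
realised by the difference of the pullbacks of $Z$ under the two projections $M\times M\times X\to M\times X$. Its image lies in $\im(Z_*)$, and by hypothesis $\im(Z_*)\subseteq\{\Gamma_*(w):w\in W\}$ for some smooth projective $W$ and correspondence $\Gamma$ on $W\times X$.

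\textbf{Key sublemma.} The essential technical input is the following Mumford--Roitman-style assertion, to be invoked separately: if $T$ is smooth projective and $\phi:T\to\CH_0(X)$ is a morphism induced by a correspondence on $T\times X$ whose image is finite-dimensional in the Roitman sense, then $\phi$ is constant on the fibres of $\alpha_T:T\to\Alb(T)$. Informally, two points of $T$ in the same Albanese fibre are connected by a chain of rational equivalences on $T$; propagating through the correspondence and using the fact that everything sits inside the $W$-parametrised family, together with Chow's moving lemma, produces a rational equivalence of the images in $\CH_0(X)$.

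\textbf{Bilinearisation and conclusion.} Applied to $\Phi$, the sublemma yields a factorisation $\tilde\Phi:\Alb(M)\times\Alb(M)\to\CH_0(X)$ with $\Phi(p,q)=\tilde\Phi(\alpha_M(p),\alpha_M(q))$. Applying the same sublemma to the enlarged map
\[
\Phi_2 : M^4 \to \CH_0(X),\qquad (p_1,p_2,q_1,q_2)\mapsto Z_*([p_1]+[p_2]-[q_1]-[q_2]),
\]
and comparing with $\Phi(p_1,q_1)+\Phi(p_2,q_2)$ via the additivity of $Z_*$, I would obtain the biadditivity relation
\[
\tilde\Phi(a_1+a_2,\,b_1+b_2) \;=\; \tilde\Phi(a_1,b_1)+\tilde\Phi(a_2,b_2).
\]
Combined with the diagonal vanishing $\tilde\Phi(a,a)=0$ (forced by $\Phi(p,p)=Z_*(0)=0$), this implies $\tilde\Phi(a,b)=\psi(a)-\psi(b)$ for some group homomorphism $\psi:\Alb(M)\to\CH_0(X)$. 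Summing over $i$ then gives $Z_*(\alpha)=\psi(\alpha_M(\alpha))$, which vanishes whenever $\alpha_M(\alpha)=0$, as required.

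\textbf{Principal difficulty.} The hard step is the sublemma. The choice of $w\in W$ with $\phi(t)=\Gamma_*(w)$ is not canonical and need not depend algebraically on $t$, so one must work with multivalued sections of $T\times W^{(k)}\to T$ after a modification, and show that moving inside $W$ along a single Albanese fibre of $T$ corresponds to a rationally connected family on the symmetric power. This is precisely the classical Roitman--Voisin moving argument, which I would invoke rather than reprove; once it is granted, the bilinearisation and the descent through the difference map $\Alb(M)\times\Alb(M)\to\Alb(M)$ are purely formal.
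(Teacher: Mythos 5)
There is a genuine gap: your ``key sublemma'' is not an auxiliary result that can be invoked separately --- it \emph{is} the theorem, in thin disguise. The assertion that a correspondence-induced map $\phi\colon T\to \CH_0(X)$ with Roitman-finite-dimensional image is constant on the fibres of the Albanese map is exactly the Roitman--Voisin statement whose proof the paper is recalling (the paper says explicitly that the proposition ``is due to Roitman and also due to Voisin'' and that it recalls the proof for convenience). Invoking ``the classical Roitman--Voisin moving argument'' as a black box is therefore circular: supplying that argument was the task. Worse, your informal justification of the sublemma rests on a false premise: two points of $T$ in the same Albanese fibre are \emph{not} in general connected by a chain of rational equivalences on $T$. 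By Mumford's theorem, when $p_g(T)>0$ the Albanese kernel is enormous, and albanese equivalence is far weaker than rational equivalence; if your premise held, the conclusion would follow for any correspondence with no finite-dimensionality hypothesis at all, and Bloch's conjecture would be trivially true.

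For comparison, the actual content of the proof is as follows. After blowing up $M$ at the support of the given cycle (so that the cycle lifts to one supported on a curve), one chooses a smooth curve $C\subset M$ such that $J(C)\to \Alb(M)$ is surjective and its kernel $K(C)$ is a \emph{simple} abelian variety of dimension strictly larger than $\dim W$; simplicity is established by a Picard--Lefschetz monodromy argument (irreducibility of the monodromy action on the vanishing cohomology, preserved under passage to finite-index subgroups of the fundamental group). One then considers
$$R=\{(k,w)\in K(C)\times W \;:\; Z_*j_*(k)=\Gamma_*(w)\}\;,$$
a countable union of Zariski-closed subsets projecting onto $K(C)$; hence some component dominates $K(C)$, a dimension count forces its fibres over $W$ to be positive dimensional, each such fibre generates $K(C)$ by simplicity, and a degree-zero cycle $z$ supported on a fibre satisfies $Z_*j_*(z)=\deg(z)\,\Gamma_*(w)=0$. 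Therefore $Z_*j_*$ annihilates all of $K(C)$, which yields the factorisation. None of these ingredients --- the simplicity lemma, the countability and dimension count, the degree-zero trick --- appears in your proposal, and they cannot be replaced by the moving-lemma heuristic you describe. (Your bilinearisation step also needs care, since the image of $M\times M$ in $\Alb(M)\times\Alb(M)$ need not be the whole product, but that is a minor formal issue compared with the missing core.)
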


\begin{proof}
By the definition of finite dimensionality in the sense of Roitman there exists a smooth projective variety $W$ and a correspondence $\Gamma$ on $W\times X$, such that image of $Z_*$ is contained in the set $\Gamma_*(W)$. Let $C\subset M$, be a smooth projective curve obtained by taking multisections of $M$, then by Lefschetz hyperplane section theorem the Jacobian of $C$, maps surjectively onto $\Alb(M)$. The kernel $K(C)$ is an abelian variety. Now we prove the following lemma about $K(C)$

\begin{lemma}\cite{Voi}
The abelian variety $K(C)$ is simple for a general multisection.
\end{lemma}
\begin{proof}
We reduce the problem to the case when $M$ is a surface, by taking hyperplane multisections of $M$ and observing that the Albanese remains the same. Meaning that if $T$ is a multisection of $M$, then $\Alb(M)=\Alb(T)$. Consider the embedding of $T$ into the projective space where $M$ is embedded. Then consider a Lefschetz pencil of hyperplane sections on $T$. In this way we obtain the curves $T_t$, for $t\in \PR^1$. Then the Picard-Lefschetz formula says that the monodromy representation of $\pi_1(\PR^1\setminus \{t_1,\cdots,t_m\},t)$ acts irreducibly on the vanishing cohomology
$$\ker(H^1(T_t,\QQ)\to H^3(S,\QQ))\;.$$
The action of $\pi_1(\PR^1\setminus \{t_1,\cdots,t_m\},t)$ on this vanishing cohomology is given in precise terms.
$$\gamma.\alpha=\alpha\pm\langle \alpha,\delta_{\gamma}\rangle \delta_{\gamma}$$
where $\gamma$ is an element of the fundamental group, $\delta_{\gamma}$ is the vanishing cycle corresponding to $\gamma$ and $\langle\rangle$ is the intersection bilinear form. Now we prove that any group $G$ of finite index in the fundamental group also acts in the irreducible way.

Let $G$ be of index $m$, in the fundamental group. Suppose that $V $ is a $G$-stable subspace of the vanishing cohomology. That means that for all $\gamma$ in $G$, and $v\in V$ we have that $\gamma.v$ is again in $V$. So let $\gamma$ belong to the fundamental group, then $\gamma^m$ is in $G$, so we have that
$$\gamma^m.v=v\pm m\langle v,\delta_{\gamma}\rangle \delta_{\gamma}$$
is in $V$, which gives that
$$m\langle v,\delta_{\gamma}\rangle \delta_{\gamma}$$
is in $V$, so
$$\langle v,\delta_{\gamma}\rangle \delta_{\gamma}$$
is in $V$, so we have that $\gamma.v$ is in for all $\gamma$. This proves that $V$ is stable under the action of the fundamental group. Therefore $V$ is either trivial or the whole space of vanishing cohomology. So $G$ acts irreducibly.

Now assume that for a general curve $T_t$, which is smooth, the abelian variety $K(T_t)$ is not simple. That would mean that there exists a non-trivial abelian subvariety $A_t$ of $K(T_t)$, which by the equivalence of Hodge structures and abelian varieties gives rise to a sub-Hodge structure of the vanishing cohomology of $T_t$, with $\QQ$ coefficients. Now  we have the point $t$ a close point on $\mathbb A^1$, so we have a homomorphism from $\CC(t)$ to $\CC$. Therefore we can pull-back $A_t$, over $\Spec(\CC(t))$. To get an abelian variety over function field. Considering a finite extension $L$ of $\CC(t)$, inside $\CC$, this abelian variety, the pull-back of $K(T_t)$ is defined over $L$. Put $L=\CC(D)$, where $D$ is a smooth projective curve mapping finitely onto $\PR^1$. Let $U$ be Zariski open in $D$, and spread $A_t,K(T_t)$, over $U$, to get $\bcA,\bcK$, abelian schemes. Throwing some more points from $U$, we have that $\bcA,\bcK\to U$ is smooth and proper, so by Ehresmann's theorem they are fibrations. So we have that $\pi_1(U,t')$ acts on $H^{2d-1}(A_t,\QQ)$ and on the vanishing cohomology and the inclusion of $H^{2d-1}(A_t,\QQ)$ into the vanishing cohomology is a map of $\pi_1(U,t')$, modules, because it is induced by the regular map from $\bcA$ to $\bcK$. Now $\pi_1(U,t')$ is a finite index subgroup of $\pi_1(\PR^1\setminus\{t_1,\cdots,t_m\},t)$ so the action of $\pi_1(U,t')$ on $H^{2d-1}(A_t,\QQ)$ is irreducible, hence we have $A_t$ is either trivial or all of $K(T_t)$. So $K(T_t)$ is simple.
\end{proof}

We now have a such a $C$ inside $M$, let $j:C\to M$ denote the closed embedding of $C$ into $M$. By the above lemma $K(C)$ is simple. So we have a homomorphism $j_*$ from $J(C)$ to $\CH_0(M)$. Note that for $H_i$ sufficiently ample we can make the dimension of $K(C)$ arbitrarily large so that dimension of $K(C)$ is greater than that of $W$.

Let $R$ be a set inside $K(C)\times W$, given by
$$\{(k,w)|Z_*j_*(k)=\Gamma_*(w)\}$$
it is known that $R$ is a countable union of Zariski closed subsets in the product. Since image of $Z_*$ is contained in $\Gamma_*(W)$, we have that, the first projection of $R$ to $K(C)$ is onto. We write $R=\cup_i R_i$. Since $K(C)$ is irreducible, we have that $R_i$ mapping onto $K(C)$, for some $i$. Then dimension of $R_i$ is greater than or equal to $\dim(K(C))$ which is  strictly greater than dimension of $W$. So we have positive dimensional fibers of the projection $R_i\to W$. So the fiber over a general point $w\in W$, generates $K(C)$, because $K(C)$ is simple. On the other hand for any $f$ in the fiber we have that
$$Z_*j_*(f)=\Gamma_*(w)$$
so for any zero cycle on the fiber we have that
$$Z_*j_*(z)=\deg(z)\Gamma_*(w)$$
since $\deg(z)=0$, for any zero cycle on the fiber we have $Z_*j_*$ vanishes identically on $K(C)$.

Now for any degree zero zero cycle $z_m$ on $M$, we write it as $z_m^+-z_m^-$, where $z_m^+=\sum_i P_1\cdots+P_k$ and $z_m^-=\sum_i P_{k+1}\cdots+P_{2k}$. Sp we have a tuple of points $(P_1,\cdots,P_{2k})$ in $M^{2k}$. We blow up $M$ at these points. Let $\tau:M'\to M$ be the blow up, with exceptional divisors $E_i$ over the point $m_i$. Let $H$ belong to $\Pic(M)$, such that $\gamma^*(H)-n\sum E_i$ is ample on $M'$ and then consider a multiple of $L$ which is very ample. Then we argue as before saying that there exists a curve $C$ in the linear system of $mL$, such that the kernel of the map
$$J(C)\to \Alb(M)$$
is a simple abelian variety of sufficiently large dimension. Now $\tau(C)$ contains all the points $m_i$. Assume that the zero cycle $z_m$ is annihilated by $alb_M$, the any lift of $z_m$ to $M'$ belongs to the kernel of $alb_{M'}$, since $\Alb(M')\to \Alb(M)$ is isomorphism. Let $z'$ be lift of $z_m$ which is supported on $C$ so $z'$ belongs to $j_*(K(C))$. Now we apply the previous argument to $Z'=Z\circ \tau$, (noting that $Z'_*$ has finite dimensional image as its image is contained in that of $Z_*$) between $M'$ and $X$, we get that
$Z_*(z_m)=Z'_*(z')=0$, so the homomorphism $Z_*$ factors through the albanese map.
\end{proof}
\begin{theorem}
\label{theorem2}
Let $S$ be a surface of general type with $p_g=0=q$, let $i:S\to S$ is an involution and $K_S^2=1$ or $2$. Let $S/i$ be birational to an Enriques surface such that the branch locus consists of a unique component of genus two or three and finitely many isolated points. Then the anti-invariant part
$$\CH_0(S)^-=\{z\in \CH_0(S):i_*(z)=-z\}$$
is finite dimensional in the Roitman sense.
\end{theorem}
\begin{proof}
Let $S$ be a surface of general type with $p_g=q=0$ such that the quotient surface $S/i$ is Enriques. Then the fixed point set of the involution is a union of finitely many points and a curve of arithmetic genus $2$ say $D$. Blow up the surface $S$ at the finitely many fixed points of the involution. Denote the surface obtained by blow-up as $\wt{S}$, let $i$ be the involution acting on $\wt{S}$. Consider the quotient $\wt{S}/i$, this is the minimal resolution of singularities of the surface $S/i$. Call this surface as $S'$. Let $L$ be a very ample line bundle on $S'=\wt{S}/i$ and we know that  $2K_S'=0$. By the adjunction formula we have that
$$L^2+L.K=2g-2$$
where $g$ is the genus of a curve in the linear system of $L$. Since $L$ is very ample the left hand side of the above equality is positive, so the genus of the curves in the linear system of $L$ is positive and greater than one. Now we have the exact sequence
$$\bcI_C\to \bcO(S')\to \bcO(S')/\bcI_C\to 0$$
where $C$ is the in the linear system of $L$, $\bcI_C=\bcO(-C)$, now tensor the above sequence with $\bcO(C)$
to get
$$0\to \bcO(S')\to \bcO(C)\to \bcO_C(C)\to 0$$
which gives
$$0\to \CC\to H^0(S',L)\to H^0(C,L|_C)\to 0$$
by the adjunction formula and by using the fact that irregularity of the surface $S'$ is zero. This gives that the dimension of the linear system of $L$ is $g$. Now for $C$ in the linear system $L$ we have the curve $\wt{C}$ in $S$, the inverse image of $C$ is a double cover of $C$ and it is smooth for general $C$. Therefore it is an ramified (along the intersection $D.C$) double cover of $C$. By the Hodge index theorem $\wt{C}$ is connected. Suppose not, then it has two components $C_1,C_2$, so the divisor $C_1+C_2$ is ample, hence
$$(C_1+C_2)^2$$
is greater than zero, which implies that $C_1^2,C_2^2$ greater than zero and $C_1.C_2=0$, since $\wt{C}$ is smooth.  This can only happen if $C_1^2=C_2^2=0$ by Hodge index theorem, which contradicts the
ampleness of the divisor $C_1+C_2$.

Let $\Gamma$ be the correspondence on $S\times S$ given by $\Delta_S-Graph(i)$. We now prove that $\Gamma_*(S^g)=\Gamma_*(S^{g+1})$

Let $s=(s_1,\cdots,s_{g})$ be a general point of $S^{g}$ and let $\sigma_i$ is the image of $s_i$ under the quotient map $S\to S'$. Then a generic $(s_1,\cdots,s_{g})$ gives rise to a generic $(\sigma_1,\cdots,\sigma_{g})$ in $S'^{g}$ and there exists a unique $C_s$ in the linear system $|L|$ such that it contains all the $\sigma_i$. The curve $C_s$ is general in the linear system, so its inverse image under the quotient map is an etale double cover $\wt{C_s} $ of $C_s$, where $\wt{C_s}$ contains all the points $s_i$. The zero cycle
$$z_s=\sum_l s_l -\sum_l i(s_l)$$
is supported on $J(\wt{C_s})$ and it is anti-invariant under $i$ so it is actually in the Prym variety  $P(\wt{C_s}/C_s)$ of the ramified double cover $\wt{C_s}\to C_s$, which is $g-1+\deg(D.C)/2$ dimensional abelian variety and also is the kernel of the norm map from $J(\wt{C_s})$ to $J(C_s)$. So we have the following map
$$(s_1,\cdots,s_g)\mapsto z_s\;,$$
from $S^g\to \CH_0(S)^{-}$
and its factors through a morphism
$$f:U\to \bcP(\wt{\bcC}/\bcC)\times J(D)$$
as follows.
Here $\bcC\to |L|_0$ is the universal smooth curve over the Zariski open set $|L|_0$ which is of dimension $g$ and $\wt{\bcC}\to |L|_0$ is its universal double cover, and $\bcP(\wt{\bcC}/\bcC)$ is the corresponding Prym fibration. So the Prym fibration is of dimension $2g-1+\deg(C.R)/2$. We have to prove that the morphism $f$ has positive dimensional fibers. Consider the subsets
$$S_{D_i}=\{(s_1,\cdots,s_g)|\textit{exactly $i$ of $s_1,\cdots,s_g$ are in $D$}\}$$

Then we can consider the map from $S_{D_i}\cong S^{g-i}\times D^i$ to $\bcP(\wt{\bcC}/\bcC)\times J(D)$. Given a general point $(s_1,\cdots,s_g)$ the image of it is contained in $P(\wt{C_s}/C_s)\times J(D)$. Now the $s_i$'s which are not in $D$, they are in the image of the map $\Sym^{g-i}\wt{C_s}\to J(\wt{C_s})$. So it is contained in a subvariety inside $J(\wt{C_s})$ as well as in $P(\wt{C_s}/C_s)$ which is of dimension $g-1+\deg(D.C_s)/2$. So the dimension of the image is less than both $g-i$ and $g-1+\deg(D.C_s)/2$. In particular the dimension of the image is less than $g-i$.  So the image of $\wt{C_s}^{g-i}\times D^i$ is of dimension atmost $g-i+2$, as it is contained in $P(\wt{C_s}/C_s)\times J(D)$. Therefore the image of  the map from $S^{g-i}\times D^i$ is in the variety given by $\bcP(\wt{\bcC}/\bcC)\times J(D)$ and it is of dimension atmost $g-i+2+g=2g-i+2$ (adding the fiber dimension with the dimension of the base). Now we can stratify the general points of $S^g$ as a point in $S^{g-i}\times D^i$ for some $i$ (ranging from $0$ to $g$). So the dimension of the fiber from $S^{g}\to \bcP(\wt{\bcC}/\bcC)\times J(D)$ (which is the image of the map $S^{g-i}\times D^i$) is atleast
$$\max\{2g-2g+i-2=i-2\}\;.$$
Now given a curve $C_s$, the intersection number $D.C_s\geq 1$. So by taking a large power of the line bundle $L$, we have $i>2$. Therefore the above fiber of the map $S^g\to \bcP(\wt{\bcC}/\bcC)\times J(D)$ is always positive. So it contains a curve. Say $F_s$.

So let $z_s$ be the zero cycles as above, supported on the Prym variety $P(\wt{C_s}/C_s)$. Then there exists a curve $F_s$ in $U$ such that for any $(t_1,\cdots,t_g)$ in $F_s$, the cycle $z_t$ is rationally equivalent to $z_s$.  Now $S$ is a minimal surface of general type with $p_g=0$, so we have $5K_S$ is very ample. So the  divisor $\sum_l pr_l^{-1}(5K_S)$ is ample on $S^g$, call it $D$. Then $D$ meets $F_s$, for all $z_s$. We have the zero cycle $z_s$ supported on $S^{g-k}\times C^k$, where $C$ is a smooth curve in the linear system of $5K_S$.

Indeed, Consider the divisor
$\sum_i \pi_i^{-1}(5K_{S})$ on the product $S^{g}$. This divisor is ample, so it intersects $F_s$, so we get that there exist points in $F_s$  contained in $C\times S^{g-1}$ where $C$ is in the linear system of $5K_{S}$. Then consider the elements of $F_s$ of the form $(c,s_1,\cdots,s_{g-1})$, where $c$ belongs to $C$. Considering the map from $S^{g-1}$ to $A_0(S)$ given by
$$(s_1,\cdots,s_{g-1})\mapsto \sum_j s_j+c-\sum_j i(s_j)-i(c)\;,$$
we see that this map factors through the product of the Prym fibration and the Jacobian of $D$ and the map from $S^{g-1}$ to $\bcP(\wt{\bcC}/\bcC)\times J(D)$ has positive dimensional fibers, since $i$ is large (here $i$ is the number of co-ordinates which contains points of $D.C_s$). So it means that, if we consider an element $(c,s_1,\cdots,s_{g-1})$ in $F_s$ and a curve through it, then it intersects the ample divisor given by $\sum_i \pi_i^{-1}(5K_{S})$, on $S^{g-1}$. Then we have some of $s_i$ is contained in $C$. So iterating this process we get that there exist elements of $F_s$ that are supported on $C^k\times S^{g-k}$, where $k$ is some natural number depending on $g$. Genus of the curve $C$ is $16$ or $31$ depending on $K_S^2=1,2$. We can choose the very ample line bundle $L$ to be such that $g$ is very large, so $k$ is larger than the genus of $C$. So we have $z_s$ is supported on $C^{16}\times S^{g-k},C^{31}\times S^{g-k}$ (respectively), hence we have that $\Gamma_*(U)=\Gamma_*(S^{i_0})$, where $i_0=16+g-k$ or $g-k+31$, which is strictly less than $g-1$, since the genus of $C$ is strictly less than $k$. That is we have proven that  for any $(s_1,\cdots,s_g)$ in $U$ in $S^g$, $z_s$ is rationally equivalent to a cycle on $S^i$. By using the fact that $\Gamma_*(U)$ is $\Gamma_*(S^g)$, we have proven that $\Gamma_*(S^g)=\Gamma_*(S^i)$.

Now we prove by induction that $\Gamma_*(S^i)=\Gamma_*(S^m)$ for all $m\geq g$. We can avoid the case when $g=i$, by adding sufficient large multiple of $L$ so that $g$ increases.
So suppose that $\Gamma_*(S^k)=\Gamma^*(S^i)$ for $k\geq g$, then we have to prove that $\Gamma_*(S^{k+1})=\Gamma_*(S^i)$. So any element in $\Gamma_*(S^{k+1})$ can be written as  $\Gamma_*(s_1+\cdots+s_i)+\Gamma_*(s)$. Now let $k-i=m$, then $i+1=k-m+1$. Since $k-m<k$, we have $k-m+1\leq k$, so $i+1\leq k$, so we have the cycle
$$\Gamma_*(s_1+\cdots+s_i)+\Gamma_*(s)$$
supported on $S^k$, hence on $S^i$. So we have that $\Gamma_*(S^i)=\Gamma_*(S^k)$ for all $k$ greater or equal than $g$. Now any element $z$ in $\CH_0(S)_{hom}$, can be written as a difference of two effective cycle $z^+,z^-$ of the same degree. Then we have
$$\Gamma_*(z)=\Gamma_*(z^+)-\Gamma_*(z_-)$$
and $\Gamma_(z_{\pm})$ belong to $\Gamma_*(S^i)$. So let $\Gamma'$ be the correspondence on $S^{2i}\times S$ defined as
$$\sum_{l\leq i}(pr_i,pr_S)^*\Gamma-\sum_{i\leq l\leq 2i}(pr_i,pr_S)^* \Gamma$$
where $\pr_i$ is the $i$-th projection from $S^i$ to $S$, and $\pr_S$ is from $S^i\times S$ to the last copy of $S$. Then we have
$$\im(\Gamma_*)=\Gamma'_*(S^{2i})\;.$$

\end{proof}

By the classification of Numerical Godeaux surfaces with an involution according to \cite{CCL}, we have that the quotient surface is either birational to an Enriques surface or it is birational to a rational surface. By applying the above theorem we get that:

\begin{theorem}
For all Numerical Godeaux surfaces with an involution such that the quotient is Enriques, the involution acts as identity on the group of algebraically trivial zero cycles modulo rational equivalence.
\end{theorem}

By the classification of Numerical Campedelli surfaces with an involution according to \cite{CLP}, when the bi-canonical map factors through the quotient map $S\to S/i$, then $S/i$ is birational to either an Enriques surface. Then by \cite{CLP}[Proposition 3.1] it follows that the fixed locus of the involution consists of a single irreducible component of genus $3$. So we have the following:

\begin{theorem}
For all Numerical Campedelli surfaces with an involution, such that the bicanonical map factors through the quotient map and the quotient is Enriques, the involution acts as identity on the group of algebraically trivial zero cycles modulo rational equivalence.
\end{theorem}

Consider a surface $S$ of general type with geometric genus equal to zero with $K^2=3$. Suppose such a surface has an involution $i$, such that the bicanonical map is not composed with the involution and the quotient is not rational, then by Theorem 5 of \cite{Ri}, we have the classification for the branch locus of the quotient map. Consider the case when this branch locus contains $5$ curves of self intersection $-1$ and an elliptic curve (in such a case the quotient is a surface not of general type with $p_g=0$). Then taking a very ample divisor on $S/i$ and arguing as in \ref{theorem2} we have:

\begin{theorem}
\label{theorem3}
The Bloch's conjecture holds for the surface $S$ with $K_S^2=3$ and having an involution such that the bicanonical map is not composed with an involution, with the components of the branch locus mentioned above.
\end{theorem}

Now consider a fake projective plane $S$, such that there is an automorphism of order three acting on it, say $g$ and $S/g$ is birational to a surface with geometric genus zero with $K^2=3$. Suppose further the desingularisation of the quotient surface has an involution not composed with the involution and having branch locus as in Theorem \ref{theorem3}. Then by Theorem 2.3 in \cite{CK} and Theorem 1.1 in \cite{K}, we have that the fixed locus of the map $S\to S/g$ is a set of three isolated points and the argument as in Theorem \ref{theorem2}, for a $6:1$ map (obtained as the composition of the maps $S\to S/g\to S/g/i$) instead of a $2:1$ shows that the automorphism acts as $\id$ on $S$. Also since the Bloch's conjecture is true on the desingularisation of the quotient $S/g/i$, we have $\id+g+g^2+g^3+g^4+g^5=\id$ on the group of algebraically trivial zero cycles. Therefore we have $5\id=0$ on $A_0(S)$, and hence by Roitman's theorem we have $A_0(S)=\{0\}$.

\end{document}